\begin{document}
\title[\hfilneg \hfil Three solutions for a fractional Laplacian]
{Three solutions for a fractional elliptic problems with critical and supercritical growth }

\author[J. Zhang \hfil \hfilneg]
{Jinguo Zhang}  

\address{Jinguo Zhang \newline
School of Mathematics, Jiangxi Normal University,
Nanchang 330022, China}
\email{jgzhang@jxnu.edu.cn}

\thanks{}
\subjclass[2010]{35J60, 47J30}
\keywords{ Fractional Elliptic equation; Variational methods; Three solutions; Moser iteration.}

\begin{abstract}
In this paper, we deal with the existence and multiplicity of solutions for the
fractional elliptic problems involving critical and supercritical Sobolev
exponent via variational arguments. By means of the truncation combining with
the Moser iteration, we prove that the problems has at least three solutions.

\end{abstract}

\maketitle
\numberwithin{equation}{section}
\newtheorem{theorem}{Theorem}[section]
\newtheorem{lemma}[theorem]{Lemma}
\newtheorem{definition}[theorem]{Definition}
\newtheorem{remark}[theorem]{Remark}
\allowdisplaybreaks

\section{Introduction and main result}
\section{Introduction}
In  this paper, we consider the existence and multiplicity of solutions for the fractional elliptic problem
\begin{equation}\label{eq01}
\left\{
\aligned
&(-\Delta)^{s}u=\lambda f(x,u)+\mu |u|^{p-2}u,\quad &\text{in}\,\, \Omega,\\
&u=0,&\text{on}\,\,\partial\Omega,
\endaligned\right.
\end{equation}
where $\Omega\subset \mathbb{R}^{N}$, $N\geq 2$, is a smooth bounded domain, $(-\Delta)^{s}$ stands for the fractional Laplacian,
$p\geq 2^{*}_{s}=\frac{2N}{N-2s}$, $\mu$ and  $\lambda $
are nonnegative constants and
 $f:\Omega\times\mathbb{R}\to \mathbb{R}$ is a Car\'{a}theodory function.

 The fractional Laplacian appears in diverse areas including physics,
 biological modeling, mathematical finances and especially partial
 differential equations involving the fractional Laplacian have been attracted by
 researchers. An important feature of the fractional Laplacian is its nonlocal property,
 which makes it difficult to handle.  Recently, Caffarelli and Silvestre \cite{ll2007}
 developed a local interpretation of the fractional Laplacian through Dirichlet-Neumann maps.
 This is commonly used in the recent literature since it allows to write nonlocal
 problems in a local way and this permits to use the variational methods for these
kinds of problems.

Based on these extensions, many authors studied nonlinear problem of the form
$(-\Delta)^{s}=f(x,u)$ for a certain function $f:\mathbb{R}^{N}\to \mathbb{R}$.
 Among others, it is worthwhile to mention the work by
Cabr\'{e}-Tan \cite{xt2010} and Tan \cite{t2011} when $s=\frac{1}{2}$. They established the
existence of positive solutions for equations having the subcritical growth, their regularity
and symmetry properties. Recently, and for the subcritical case, Choi, Kim and Lee \cite{ckl2013}
developed a nonlocal analog of the results by Han \cite{han1991} and Rey \cite{rey1990}.

In this paper, we study the existence and multiplicity of solutions for the problem with critical
and supercritical growth. For our problem, the first difficulty lies in
that the fractional Laplacian operator $(-\Delta)^{s}$ is nonlocal,
the nonlocal property of $(-\Delta)^{s}$ makes some calculations  difficult. To overcome this difficult,
we do not work on the space $H^{s}_{0}(\Omega)$ directly, we transform the nonlocal problem into a local problem by the extension introduced by Caffarelli and Silvestre in \cite{ll2007}.
After this extension, the problem \eqref{eq01} can be reduced to the problem
\begin{equation}\label{eq02}
\left\{
\aligned
&div(y^{1-2s}\nabla w)=0,\quad &\text{in}\,\,\mathcal{C},\\
&w=0,&\text{on}\,\,\partial_{L}\mathcal{C},\\
&\partial^{s}_{\nu}w=\lambda f(x,w)+\mu |w|^{p-2}w,\quad &\text{in}\,\, \Omega\times\{0\},\\
\endaligned\right.
\end{equation}
where $\nu$ is the outward unit normal vector to $\mathcal{C}$ on $\Omega\times\{0\}$ and
$$\partial^{s}_{\nu}w(x,0):=-\lim\limits_{y\to 0^+}y^{1-2s}\frac{\partial w}{\partial y}(x,y),\quad \forall x\in \Omega.$$
Obviously, the equation \eqref{eq02} is a local problem.

The second difficult lies in that problem \eqref{eq01} is a supercritical problem.
Hence, we can not use directly the variational techniques because the
corresponding energy functional is not well-defined on the Sobolev space $H^{s}_{0}(\Omega)$.
To overcome this difficult, one usually uses the truncation and the Moser iteration. This spirt
has been widely used in the supercritical Laplacian equation in the past few decades,
see \cite{jj1997,abc1994,j1960,jsg2006,ff2007,zz2014} and references therein.

The aim of this paper is to study the problem \eqref{eq01} when $p\geq 2^{*}_{s}$. During this
study we develop some nonlocal techniques which also have their own interests.
In order to state our main results, we formulate the following assumptions:
\begin{itemize}
\item[$(f_1)$] $\lim\limits_{|t|\to +\infty}\frac{f(x,t)}{|t|}=0$ uniformly in $x\in \Omega$;
\item[$(f_2)$]$\lim\limits_{|t|\to 0}\frac{f(x,t)}{|t|}=0$ uniformly in $x\in \Omega$;
\item[$(f_3)$] $\sup\limits_{u\in H^{s}_{0}(\Omega)}\int_{\Omega} F(x,u)dx>0$, and for every $M>0$,
$f(x,u)\in L^{\infty}(\Omega)$ for each $|u|\leq M$,
 where $F(x,u)=\int_{0}^{u}f(x,t)dt$.
\end{itemize}

Set
$$\theta:=\frac{1}{2}\inf\Big{\{}\frac{\int_{\Omega}|(-\Delta)^{\frac{s}{2}}u|^{2}dx}{\int_{\Omega}F(x,u)dx}:\,\,u\in H^{s}_{0}(\Omega),\int_{\Omega}F(x,u)dx>0\Big{\}}.$$
The main results are as follows.
\begin{theorem}\label{th1}
Assume that $(f_1)-(f_3)$ hold. Then there exists  $\delta>0$
such that for any $\mu\in [0\,,\,\delta]$, there exist an
compact interval $[a\,,\,b]\subset(\frac{1}{\theta}\,,\,+\infty)$ and a
constant $\gamma>0$ such that for each $\lambda\in[a\,,\,b]$, the
problem \eqref{eq01} has at least three solutions in $H^{s}_{0}(\Omega)$, whose norms are less than $\gamma$.
\end{theorem}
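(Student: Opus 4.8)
The plan is to combine four ingredients: the Caffarelli--Silvestre extension of \cite{ll2007} to localise the equation; a truncation of the supercritical power to obtain a problem with subcritical growth; an abstract three--critical--points theorem of Ricceri's type applied to the truncated problem; and a Moser iteration to show that the solutions so obtained are bounded, hence actually solve the original equation. Concretely, I would work in the Hilbert space $X^{s}_{0}(\mathcal{C})$, the completion of the smooth functions on $\overline{\mathcal{C}}$ vanishing on $\partial_{L}\mathcal{C}$ under $\|w\|^{2}=\kappa_{s}\int_{\mathcal{C}}y^{1-2s}|\nabla w|^{2}\,dx\,dy$, with $\kappa_{s}$ chosen so that the trace map $\mathrm{Tr}\colon X^{s}_{0}(\mathcal{C})\to H^{s}_{0}(\Omega)$ is an isometry; by the extension procedure, weak solutions of \eqref{eq02} in $X^{s}_{0}(\mathcal{C})$ correspond exactly to weak solutions of \eqref{eq01} in $H^{s}_{0}(\Omega)$, with equal norms. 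For $K>0$ I would introduce the bounded truncation $g_{K}\in C(\mathbb{R})$ with $g_{K}(t)=|t|^{p-2}t$ for $|t|\le K$ and $g_{K}(t)=K^{p-1}\operatorname{sgn}(t)$ for $|t|>K$, set $G_{K}(t)=\int_{0}^{t}g_{K}$, and consider the truncated problem $(\mathrm{P}_{K})$ obtained by replacing $|u|^{p-2}u$ with $g_{K}(u)$ in \eqref{eq01}. On $X^{s}_{0}(\mathcal{C})$ I would then put
\[
\Phi(w)=\tfrac12\|w\|^{2},\qquad \Psi(w)=\int_{\Omega}F(x,\mathrm{Tr}\,w)\,dx,\qquad \Upsilon_{K}(w)=\int_{\Omega}G_{K}(\mathrm{Tr}\,w)\,dx,
\]
so that the critical points of $\Phi-\lambda\Psi-\mu\Upsilon_{K}$ are precisely the weak (extended) solutions of $(\mathrm{P}_{K})$.

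Next I would verify the hypotheses of Ricceri's three critical points theorem, in the version producing a compact parameter interval and tolerating an arbitrary compactly--perturbing functional. The functional $\Phi$ is $C^{1}$, coercive, sequentially weakly lower semicontinuous, bounded on bounded sets, and $\Phi'$ is the Riesz isomorphism, hence has a continuous inverse. From $(f_{1})$, $(f_{2})$ and the boundedness part of $(f_{3})$ one gets, for every $\varepsilon>0$, estimates $|f(x,t)|\le\varepsilon|t|+C_{\varepsilon}$ and $|F(x,t)|\le\varepsilon t^{2}+C_{\varepsilon}|t|^{2^{*}_{s}}$ for all $t$; combined with the compactness of the trace embedding $X^{s}_{0}(\mathcal{C})\hookrightarrow L^{q}(\Omega)$ for $q<2^{*}_{s}$, this gives $\Psi\in C^{1}$ with $\Psi'$ compact and, since $g_{K}$ is bounded, $\Upsilon_{K}\in C^{1}$ with $\Upsilon_{K}'$ compact. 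Moreover $\Phi(0)=\Psi(0)=0$ and $0$ is a strict global minimum of $\Phi$. Using the linear bound on $F$ one obtains $\limsup_{\|w\|\to\infty}\Psi(w)/\Phi(w)=0$, and using the bound with the $|t|^{2^{*}_{s}}$ term together with $2^{*}_{s}>2$ one obtains $\limsup_{w\to0}\Psi(w)/\Phi(w)=0$, so the relevant quantity $\varphi_{1}:=\max\{0,\limsup_{\|w\|\to\infty}\Psi/\Phi,\limsup_{w\to0}\Psi/\Phi\}$ vanishes; while by $(f_{3})$ there is $u_{0}\in H^{s}_{0}(\Omega)$ with $\int_{\Omega}F(x,u_{0})\,dx>0$, so extending $u_{0}$ shows that $\varphi_{2}:=\sup_{\Phi(w)>0}\Psi(w)/\Phi(w)$ is strictly positive. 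Since $\varphi_{1}=0<\varphi_{2}$, the abstract theorem produces a compact interval $[a,b]\subset(1/\theta,+\infty)$ and a number $\gamma>0$, \emph{independent of $K$ and of $\mu$}, such that for each $\lambda\in[a,b]$ there is $\delta_{\lambda}>0$ with the property that for every $\mu\in[0,\delta_{\lambda}]$ problem $(\mathrm{P}_{K})$ has at least three solutions $w_{1},w_{2},w_{3}\in X^{s}_{0}(\mathcal{C})$ with $\|w_{i}\|<\gamma$.

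It remains to choose $K$ and $\delta$ so that these solutions actually solve \eqref{eq01}, the point being that $\gamma$ does not depend on $K$ or $\mu$. Fix $\varepsilon>0$ small and set $c_{0}:=bC_{\varepsilon}+1$, which depends only on $f$, $b$ and $\varepsilon$. If $\mu\le K^{1-p}$ then $|\mu g_{K}(t)|\le\mu K^{p-1}\le1$, so for any solution $w$ of $(\mathrm{P}_{K})$ with $\|w\|<\gamma$ and $\lambda\le b$ one has $|\lambda f(x,\mathrm{Tr}\,w)+\mu g_{K}(\mathrm{Tr}\,w)|\le b\varepsilon\,|\mathrm{Tr}\,w|+c_{0}$ on $\Omega$, with a bound \emph{independent of $K$}. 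Running the Moser iteration for $(\mathrm{P}_{K})$ --- starting from $\mathrm{Tr}\,w\in L^{2^{*}_{s}}(\Omega)$ and bootstrapping the integrability --- one obtains $\|\mathrm{Tr}\,w\|_{L^{\infty}(\Omega)}\le L$, where $L=L(b,\varepsilon,c_{0},\gamma,N,s,\Omega)$ is a fixed constant, independent of $K$, of $\mu$ and of $\lambda\in[a,b]$. I would then set $K:=\max\{L,1\}$ and $\delta:=\min\{1,\,K^{1-p},\,\inf_{\lambda\in[a,b]}\delta_{\lambda}\}$ (shrinking $[a,b]$ if necessary so that $\inf_{\lambda\in[a,b]}\delta_{\lambda}>0$, using that the smallness threshold in Ricceri's theorem can be taken uniform on a compact $\lambda$--subinterval). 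Then for every $\mu\in[0,\delta]$ and every $\lambda\in[a,b]$ the three solutions $w_{i}$ of $(\mathrm{P}_{K})$ satisfy $\|\mathrm{Tr}\,w_{i}\|_{L^{\infty}(\Omega)}\le L\le K$, whence $g_{K}(\mathrm{Tr}\,w_{i})=|\mathrm{Tr}\,w_{i}|^{p-2}\mathrm{Tr}\,w_{i}$ and each $w_{i}$ solves \eqref{eq02}. Consequently $u_{i}:=\mathrm{Tr}\,w_{i}$, $i=1,2,3$, are three solutions of \eqref{eq01} with $\|u_{i}\|_{H^{s}_{0}(\Omega)}=\|w_{i}\|<\gamma$, which is the assertion.

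The functional--analytic verifications and the computation of $\varphi_{1}$ and $\varphi_{2}$ are routine. The delicate step --- and the place where the supercriticality $p\ge2^{*}_{s}$ (forcing the truncation) and the local boundedness of $f$ in $(f_{3})$ are genuinely used --- is the last one: one must carry out the Moser iteration for the truncated, critically perturbed problem while keeping explicit track of how the resulting $L^{\infty}$ bound depends on the $H^{s}_{0}$--bound $\gamma$, on the truncation level $K$ and on $\mu$, so as to close the circular dependence between $K$, $\gamma$ and $\delta$; one must also be careful that the smallness threshold $\delta$ can indeed be taken uniform in $\lambda$ over the compact interval $[a,b]$.
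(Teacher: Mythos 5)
Your proposal is correct and follows the same overall strategy as the paper: Caffarelli--Silvestre extension to localise the operator, truncation of the supercritical term, Ricceri's three-critical-points theorem applied to the truncated functional (using crucially that the bound $\gamma$ in that theorem is independent of the compact perturbation, hence of $K$ and $\mu$), and a Moser iteration to show that the three solutions stay below the truncation level. The one genuine difference is the truncation itself: you cut $|t|^{p-2}t$ off to the bounded function $K^{p-1}\operatorname{sgn}(t)$ for $|t|>K$, whereas the paper replaces it by the subcritical power $K^{p-q}|t|^{q-2}t$ with $q\in(2\,,\,2^{*}_{s})$. Your choice simplifies the iteration: under the extra restriction $\mu\le K^{1-p}$ the right-hand side has linear growth with constants independent of $K$, so you obtain an $L^{\infty}$ bound $L$ first and only then set $K=\max\{L,1\}$. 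The paper instead carries $\mu K^{p-q}$ inside the iteration constant $C_{\mu,K}$ and closes the loop at the end by choosing $K$ large and then $\mu$ small so that the resulting bound $(4S\,C_{\mu,K})^{\theta_1}\beta^{\theta_2}(S\gamma)^{\frac{1}{2}}$ does not exceed $K/2$. Both routes succeed for the same structural reason ($\gamma$ is fixed before $K$ and $\mu$ are chosen); yours trades a somewhat more restrictive smallness condition on $\mu$ for a cleaner, $K$-free iteration, and you are in fact more careful than the paper in flagging that the smallness threshold $\delta_{0}$ produced by Ricceri's theorem must be made uniform in $\lambda$ over the compact interval $[a\,,\,b]$.
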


For the general problem
\begin{equation}\label{eq04}
\left\{
\aligned
&(-\Delta)^{s}u=\lambda f(x,u)+\mu g(x,u),\quad &\text{in}\,\,\Omega,\\
&u=0,&\text{on}\,\, \partial\Omega,
\endaligned\right.
\end{equation}
where $\Omega\subset \mathbb{R}^{N}$ is a bounded smooth domain, and
\begin{itemize}
\item[(g)]$|g(x,u)|\leq C(1+|u|^{p-1})$, where $ p\geq 2^*_{s}=\frac{2N}{N-2s}$, $ C>0$.
\end{itemize}
If $f$ satisfies the conditions $(f_1)-(f_3)$, we also have the following result similar to Theorem \ref{th1}.

\begin{theorem}\label{th2}
Let $f$ satisfy $(f_1)-(f_3)$ and $g$ satisfy $(g)$. Then  there exists  $\delta>0$
such that for any $\mu\in [0\,,\,\delta]$, there exist an
compact interval $[a\,,\,b]\subset(\frac{1}{\theta}\,,\,+\infty)$ and a
constant $\gamma>0$ such that for each $\lambda\in[a\,,\,b]$, the
problem \eqref{eq04} has at least three solutions in $H^{s}_{0}(\Omega)$,
whose $H^{s}_{0}(\Omega)$-norms are less than $\gamma$.
\end{theorem}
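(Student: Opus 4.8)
The plan is to run the same scheme one uses for the model term $\mu|u|^{p-2}u$ (that is, for Theorem \ref{th1}), the point being that hypothesis $(g)$ delivers exactly the growth control $|g(x,u)|\le C(1+|u|^{p-1})$ that is used at every step. First I would pass from \eqref{eq04} to its local realisation \eqref{eq02} via the Caffarelli--Silvestre extension, working on the Hilbert space $X^{s}_{0}(\mathcal{C})=\{w:\ w=0\ \text{on}\ \partial_{L}\mathcal{C},\ \int_{\mathcal{C}}y^{1-2s}|\nabla w|^{2}<\infty\}$ endowed with $\|w\|^{2}:=\kappa_{s}\int_{\mathcal{C}}y^{1-2s}|\nabla w|^{2}$, recalling that weak solutions of \eqref{eq02} correspond bijectively to weak solutions of \eqref{eq04} and that $\|w\|$ equals the $H^{s}_{0}(\Omega)$-norm of $u=\operatorname{tr}w$ (in the normalisation used to define $\theta$). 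Since $g$ may be supercritical, the natural functional is not defined on $X^{s}_{0}(\mathcal{C})$, so I would truncate: fix a level $M>0$ (to be pinned down later) and replace $g$ by $\widetilde{g}$, with $\widetilde{g}(x,u)=g(x,u)$ for $|u|\le M$ and $\widetilde{g}$ continued so that $|\widetilde{g}(x,u)|\le C(1+M^{p-1})$ for all $u$; no truncation is needed for $f$, since $(f_1)$--$(f_3)$ already force $|f(x,t)|\le\varepsilon|t|+c_{\varepsilon}$ for every $\varepsilon>0$.

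Next I would apply a Ricceri-type three critical points theorem with perturbation to the truncated functional $\mathcal{E}_{\lambda,\mu}=\Phi-\lambda\Psi-\mu\mathcal{J}$ on $X^{s}_{0}(\mathcal{C})$, where $\Phi(w)=\tfrac12\|w\|^{2}$, $\Psi(w)=\int_{\Omega}F(x,\operatorname{tr}w)\,dx$ and $\mathcal{J}(w)=\int_{\Omega}\widetilde{G}(x,\operatorname{tr}w)\,dx$ with $\widetilde{G}(x,u)=\int_{0}^{u}\widetilde{g}(x,t)\,dt$. The hypotheses are routine to verify: $\Phi$ is convex, continuous, coercive and bounded on bounded sets, and $\Phi'$ is the Riesz isomorphism, hence has a continuous inverse; by the compact trace embedding $X^{s}_{0}(\mathcal{C})\hookrightarrow L^{q}(\Omega)$ for $q<2^{*}_{s}$, together with the bound on $f$ and the (sub)linear growth of $\widetilde{g}$, the derivatives $\Psi'$ and $\mathcal{J}'$ are compact and $\Psi,\mathcal{J}$ are sequentially weakly continuous; $w=0$ is a strict local minimum of $\Phi$ with $\Phi(0)=\Psi(0)=\mathcal{J}(0)=0$ because $F(x,0)=\widetilde{G}(x,0)=0$; and $(f_1)$ (behaviour at infinity) and $(f_2)$ (behaviour at the origin) give $\limsup_{\|w\|\to\infty}\Psi(w)/\Phi(w)\le0$ and $\limsup_{w\to0}\Psi(w)/\Phi(w)\le0$, while $(f_3)$ makes $\sup_{\Phi(w)>0}\Psi(w)/\Phi(w)$ strictly positive. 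The abstract theorem then yields a compact interval $[a,b]$ — which, by the definition of $\theta$ and $(f_3)$, can be taken inside $(\tfrac1\theta,+\infty)$ — and a radius $\gamma>0$ depending only on $\Phi,\Psi,[a,b]$ (in particular \emph{not} on the truncated term), and, for each $\lambda\in[a,b]$, a threshold $\delta>0$ such that for every $\mu\in[0,\delta]$ the equation $\Phi'(w)=\lambda\Psi'(w)+\mu\mathcal{J}'(w)$, i.e.\ the truncated version of \eqref{eq02}, has at least three solutions $w$ with $\|w\|<\gamma$.

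It then remains to remove the truncation, which is the heart of the argument. For such a solution $w$, the trace $u=\operatorname{tr}w\in H^{s}_{0}(\Omega)$ solves $(-\Delta)^{s}u=\lambda f(x,u)+\mu\widetilde{g}(x,u)$ with right-hand side bounded by $\varepsilon|u|+\bigl(c_{\varepsilon}b+\mu C(1+M^{p-1})\bigr)$. I would run the Moser iteration on the extension — testing \eqref{eq02}-type identities with truncated powers of $w$ and using the trace Sobolev inequality in $X^{s}_{0}(\mathcal{C})$ to bootstrap from $L^{2^{*}_{s}}$ up to $L^{\infty}$ on $\Omega\times\{0\}$, absorbing the small linear term $\varepsilon|u|$ for $\varepsilon$ small — to obtain $\|u\|_{L^{\infty}(\Omega)}\le C_{1}\gamma+C_{2}\bigl(c_{\varepsilon}b+\mu C(1+M^{p-1})\bigr)$ with $C_{1},C_{2}$ depending only on $N,s,\Omega$. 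Now choose $M>2\bigl(C_{1}\gamma+C_{2}c_{\varepsilon}b\bigr)$ and then shrink $\delta$ so that $C_{2}\delta C(1+M^{p-1})\le M/2$; this makes $\|u\|_{L^{\infty}(\Omega)}\le M$, so the truncation is inactive, $\widetilde{g}(x,u)=g(x,u)$, and $u$ is a genuine solution of \eqref{eq04}, with $\|u\|_{H^{s}_{0}(\Omega)}=\|w\|<\gamma$. Distinctness of the three solutions follows from that of the corresponding $w$'s.

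The step I expect to be the main obstacle is precisely this last one: making the truncation self-consistent. There is an apparent circularity — the a priori $L^{\infty}$ bound produced by the iteration involves the truncated nonlinearity, yet one wants the truncation level to exceed that bound — and it is broken, as above, by using that the radius $\gamma$ supplied by Ricceri's theorem is independent of the perturbation $\mathcal{J}$, so that $M$ can be fixed before $\delta$ is chosen; some care is also needed to push the Moser scheme through the degenerate-weight extension associated with the nonlocal operator. By comparison, the verification of the abstract hypotheses, the equivalence between \eqref{eq04} and \eqref{eq02}, and the elementary growth estimates coming from $(f_1)$--$(f_3)$ and $(g)$ are straightforward.
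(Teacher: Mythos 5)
Your proposal is correct and follows essentially the same route as the paper: Caffarelli--Silvestre extension, truncation of the supercritical term, Ricceri's three critical points theorem (with $\gamma$ independent of the perturbation and of the truncation level), and a Moser iteration on the weighted extension to show the truncation is inactive. The only cosmetic difference is that you truncate $g$ to a bounded function at level $M$, while the paper caps it by a subcritical power $C_{0}(1+K^{p-q}|s|^{q-2}s)$ with $q\in(2,2^{*}_{s})$; both choices give a compact derivative and feed into the same iteration scheme.
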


The paper is organized as follows. In Section 2, we introduce a variational
setting of the problem and present some preliminary results.
In Section 3, some properties of the fractional operator are discussed, and
apply the truncation and the Moser iteration to obtain
  the proof of Theorem \ref{th1} and Theorem \ref{th2}.

For convenience we fix some notations. $L^{p}(\Omega)$ $(1<p\leq \infty)$
denotes the usual Sobolev space with norm $\|\cdot\|_{L^p}$;
$C_{0}(\bar{\Omega})$ denotes the space of continuous real functions in
 $\bar{\Omega}$ vanishing on the boundary $\partial\Omega$;
 $C$ or $C_{i}(i=1,2,\cdot\cdot\cdot,)$ denote any positive constant.

\section{Preliminaries and functional setting}
In this section we recall some basic properties of the fractional Laplacian.
In the entire space, the operator $(-\Delta)^{s}$ in $\mathbb{R}^{N}$, $0<s<1$,
is defined through Fourier transform $\mathcal{F}$, by
$$\mathcal{F}[(-\Delta)^{s}u](\xi)=|\xi|^{2s}\mathcal{F}[u](\xi).$$
On a bounded domain $\Omega$, we define $(-\Delta)^{s}$ through the spectral decomposition
of $-\Delta$ in $H^{1}_{0}(\Omega)$:
$$(-\Delta)^{s}u=\sum\limits_{i=1}^{\infty}\mu^{s}_{i}u_{i}\varphi_{i},$$
where $u=\sum\limits_{i=1}^{\infty}u_{i}\varphi_{i}$, $u_{i}=\int_{\Omega}u\varphi_{i}dx$
and
$(\mu_{i}\,,\,\varphi_{i})$ are the eigenvalues and corresponding eigenfunctions
of $-\Delta$ on $H^{1}_{0}(\Omega)$. The fractional Laplacian is well defined in
the fractional Sobolev space $H^{s}_{0}(\Omega)$,
$$H^{s}_{0}(\Omega)=\{u=\sum a_{j}\varphi_{j}\in L^{2}(\Omega):\|u\|_{H^{s}_{0}}=(\sum a_{j}^{2}\lambda^{s})^{\frac{1}{2}}<\infty\},$$
which is a Hilbert space endowed with the following inner product
$$\langle\sum\limits_{i=1}^{\infty}a_{i}\phi_{i}\,,\,\sum\limits_{i=1}^{\infty}b_{i}\phi_{i}\rangle=
\sum\limits_{i=1}^{\infty}a_{i}b_{i}\mu_{i}^{s},$$
and we have the following expression for this inner product
$$\langle u\,,\,v\rangle=\int\limits_{\Omega}(-\Delta)^{\frac{s}{2}}u\cdot(-\Delta)^{\frac{s}{2}}vdx
=\int\limits_{\Omega}(-\Delta)^{s}u\,v\,dx,\quad \forall \,u,\,v\in H^{s}_{0}(\Omega).$$

We will often work with an equivalent definition based on an appropriate extension problem
introduced by Caffaarelli and Silvestre \cite{ll2007}. Let $\Omega$ be a bounded smooth domain in $\mathbb{R}^{N}$, the half cylinder with base $\Omega$ denote by
$\mathcal{C}=\Omega\times(0\,,\,\infty)\subset \mathbb{R}^{N+1}_{+}$ and
its lateral boundary given that $\partial_{L}\mathcal{C}=\partial\Omega\times[0\,,\,\infty)$, where
$$\mathbb{R}^{N+1}_{+}=\{(x,y)=(x_{1},x_{2},\cdot\cdot\cdot,x_{n},y)\in \mathbb{R}^{N+1}:\,y>0\}.$$

The space $H^{s}_{0,L}(\mathcal{C})$ is defined as the completion of
$$C^{s}_{0,L}(\mathcal{C})=\{w\in C^{\infty}(\bar{\mathcal{C}}):
\,w=0\,\,\text{on}\,\,\partial_{L}\mathcal{C}\}$$
with respect to the norm
$$\|w\|_{H^{s}_{0,L}(\mathcal{C})}=\Big(\int\limits_{\mathcal{C}}y^{1-2s}|\nabla w|^{2}dxdy\Big)^{\frac{1}{2}}.$$
This is a Hilbert space endowed with the following inner product
$$\langle w\,,\,v\rangle=\int\limits_{\mathcal{C}}y^{1-2s}\nabla w\,\nabla v\,dxdy,
\quad \forall \,w\,,v\in H^{s}_{0,L}(\mathcal{C}).$$

\begin{definition}\label{def01}
We say that $u\in H^{s}_{0}(\Omega)$ is a
solution of Equation \eqref{eq01} such that for every function
$\varphi\in H^{s}_{0}(\Omega)$, it holds
$$\int\limits_{\Omega}(-\Delta)^{\frac{s}{2}}u(-\Delta)^{\frac{s}{2}}\varphi dx=\lambda\int\limits_{\Omega}f(x,u)\varphi dx
+\mu\int\limits_{\Omega}|u|^{p-2}u\varphi dx.$$
\end{definition}

 Associated with problem \eqref{eq01} we consider the energy functional
 $$I(u)=\frac{1}{2}\int\limits_{\Omega}|(-\Delta)^{\frac{s}{2}}u|^{2}dx
 -\lambda\int\limits_{\Omega}F(x,u)dx
 -\frac{\mu}{p}\int\limits_{\Omega}|u|^{p}dx.$$

We now conclude the main ingredients of a recently developed technique
which can deal with fractional power of the Laplacian.
To treat the nonlocal problem \eqref{eq01}, we will study a
corresponding extension problem, so that we can investigate
problem \eqref{eq01} by studying a local problem via classical
nonlinear variational methods.

We first define the extension operator and fractional Laplacian for
functions in $H^{s}_{0}(\Omega)$.

\begin{definition}
Given a function $u\in H^{s}_{0}(\Omega)$,
we define its $s$-harmonic extension $w=E_{s}(u)$
to the cylinder $\mathcal{C}$ as a solution of the problem
\begin{equation*}\left\{
\aligned
&div(y^{1-2s}\nabla w)=0,\quad &\text{in}\,\,\mathcal{C},\\
&w=0,&\text{on}\,\,\partial_{L}\mathcal{C},\\
&w=u,&\text{on}\,\,\Omega\times\{0\}.
\endaligned\right.
\end{equation*}
\end{definition}
Following \cite{ll2007}, we can define the fractional Laplacian
operator by the Dirichlet to Neumann map as follows.
\begin{definition}
For any regular function $u(x)$, the fractional Laplacian
$(-\Delta)^{s}$ acting on $u$ is defined by
$$(-\Delta)^{s}u(x)=-\lim\limits_{y\to 0^{+}}y^{1-2s}\frac{\partial w}{\partial y}(x,y),\quad \forall x\in \Omega,\quad y\in (0\,,\,\infty),$$
where $w=E_{s}(u)$.
\end{definition}

From \cite{ll2007} and \cite{bc2013}, the map $E_{s}(\cdot)$ is an
isometry between  $H^{s}_{0}(\Omega)$ and
$H^{s}_{0,L}(\mathcal{C})$.
Furthermore, we have
\begin{itemize}
\item[(i)]$\|(-\Delta)^{s}u\|_{H^{-s}(\Omega)}=
\|u\|_{H^{s}_{0}(\Omega)}
=\|E_{s}(u)\|_{H^{s}_{0,L}(\mathcal{C})}$,
where $H^{-s}(\Omega)$ denotes the dual
space of $H^{s}_{0}(\Omega)$;
\item[(ii)]For any $w\in H^{s}_{0,L}(\mathcal{C})$,
there exists a constant $C$ independent of $w$ such that
$$\|\text{tr}_{\Omega}w\|_{L^{r}(\Omega)}\leq C\|w\|_{H^{s}_{0,L}(\mathcal{C})}$$
holds for every $r\in [2\,,\,\frac{2N}{N-2s}]$.
Moreover, $H^{s}_{0,L}(\mathcal{C})$
is compactly embedded into
$L^{r}(\Omega)$ for $r\in [2\,,\,\frac{2N}{N-2s})$.
\end{itemize}

In the following lemma, we will list some inequalities.
\begin{lemma}\label{le3}
For every $1\leq r\leq \frac{2N}{N-2s}$, and every
$w\in H^{s}_{0,L}(\mathcal{C})$, it holds
$$\Big(\int\limits_{\Omega\times\{0\}}|w|^{r}dx\Big)^{\frac{2}{r}}
\leq C\int\limits_{\mathcal{C}}y^{1-2s}|\nabla w|^{2}dxdy,$$
where constant $C$ depends on $r,\,s,\,N,\,|\Omega|$.
\end{lemma}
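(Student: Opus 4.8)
The plan is to reduce this weighted trace inequality on the cylinder $\mathcal{C}$ to the fractional Sobolev inequality on $\Omega$, using the isometry $\|E_{s}(u)\|_{H^{s}_{0,L}(\mathcal{C})}=\|u\|_{H^{s}_{0}(\Omega)}$ together with the fact that the $s$-harmonic extension minimizes the weighted Dirichlet energy among all extensions sharing the same trace; the subcritical range $1\le r<\frac{2N}{N-2s}$ is then obtained by a H\"older interpolation on the bounded domain $\Omega$.

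First I would treat the critical exponent $r=2^{*}_{s}=\frac{2N}{N-2s}$. Given $w\in H^{s}_{0,L}(\mathcal{C})$, set $u:=\text{tr}_{\Omega}w\in H^{s}_{0}(\Omega)$, so that $w-E_{s}(u)\in H^{s}_{0,L}(\mathcal{C})$ has vanishing trace on $\Omega\times\{0\}$ and vanishes on $\partial_{L}\mathcal{C}$. Testing the equation $\operatorname{div}(y^{1-2s}\nabla E_{s}(u))=0$ against $w-E_{s}(u)$ and integrating by parts in the weighted space — the boundary contributions on $\partial_{L}\mathcal{C}$ and on $\Omega\times\{0\}$ drop out because $w-E_{s}(u)$ is zero there in the trace sense — gives $\int_{\mathcal{C}}y^{1-2s}\nabla E_{s}(u)\cdot\nabla(w-E_{s}(u))\,dx\,dy=0$. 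Expanding $|\nabla w|^{2}=|\nabla E_{s}(u)|^{2}+2\nabla E_{s}(u)\cdot\nabla(w-E_{s}(u))+|\nabla(w-E_{s}(u))|^{2}$ and integrating against $y^{1-2s}$ yields
$$\int_{\mathcal{C}}y^{1-2s}|\nabla w|^{2}\,dx\,dy
= \int_{\mathcal{C}}y^{1-2s}|\nabla E_{s}(u)|^{2}\,dx\,dy
+\int_{\mathcal{C}}y^{1-2s}|\nabla(w-E_{s}(u))|^{2}\,dx\,dy
\ge \|E_{s}(u)\|_{H^{s}_{0,L}(\mathcal{C})}^{2}=\|u\|_{H^{s}_{0}(\Omega)}^{2}.$$
Since the fractional Sobolev inequality gives $\|u\|_{L^{2^{*}_{s}}(\Omega)}^{2}\le C(N,s)\,\|u\|_{H^{s}_{0}(\Omega)}^{2}$ — equivalently, this is exactly the trace embedding (ii) at the endpoint $r=2^{*}_{s}$ — combining with the previous display produces $\big(\int_{\Omega\times\{0\}}|w|^{2^{*}_{s}}dx\big)^{2/2^{*}_{s}}\le C\int_{\mathcal{C}}y^{1-2s}|\nabla w|^{2}\,dx\,dy$.

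For $1\le r<2^{*}_{s}$, since $|\Omega|<\infty$, H\"older's inequality gives $\|w\|_{L^{r}(\Omega\times\{0\})}\le|\Omega|^{\frac{1}{r}-\frac{1}{2^{*}_{s}}}\|w\|_{L^{2^{*}_{s}}(\Omega\times\{0\})}$; squaring and inserting the critical case produces the asserted inequality with a constant of the form $C=C(N,s)\,|\Omega|^{\frac{2}{r}-\frac{2}{2^{*}_{s}}}$, which depends only on $r,s,N,|\Omega|$. I expect the only genuinely delicate point to be the first step, namely making rigorous the energy-minimality of $E_{s}(u)$: this needs some care with the integration by parts in the degenerate/singular weighted space $H^{s}_{0,L}(\mathcal{C})$ and with the precise meaning of traces there. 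Both are by now standard in the Caffarelli--Silvestre framework and can be quoted from \cite{ll2007,bc2013}; alternatively one may appeal directly to the weighted Sobolev trace embedding for the Muckenhoupt $A_{2}$ weight $y^{1-2s}$, which bypasses the extension argument altogether.
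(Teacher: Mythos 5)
The paper does not actually prove Lemma~\ref{le3}: it is listed among ``some inequalities'' and is in substance a restatement of the trace embedding (ii) quoted from \cite{ll2007,bc2013}, so there is no in-paper argument to compare against. Your proof is correct as far as it goes: the orthogonality $\int_{\mathcal{C}}y^{1-2s}\nabla E_{s}(u)\cdot\nabla(w-E_{s}(u))\,dx\,dy=0$ is exactly the weak formulation of the extension problem tested against a function with vanishing trace, the resulting energy-minimality gives $\|u\|_{H^{s}_{0}(\Omega)}^{2}\le\|w\|_{H^{s}_{0,L}(\mathcal{C})}^{2}$, and the H\"older step on the bounded domain handles $1\le r<2^{*}_{s}$ (which is needed, since (ii) as stated only covers $r\ge 2$). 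The one point to flag is what you take as primitive: your argument reduces the trace inequality on the cylinder to the embedding $H^{s}_{0}(\Omega)\hookrightarrow L^{2^{*}_{s}}(\Omega)$ for the spectral fractional Laplacian, but in the Caffarelli--Silvestre framework that embedding is itself most commonly \emph{derived from} the cylinder trace inequality, so quoting it here risks circularity unless you justify it independently (eigenfunction/interpolation arguments do exist, and \cite{bc2013} can be cited). A more economical route, using only what the paper already provides, is the one you allude to at the end: since $w$ vanishes on $\partial_{L}\mathcal{C}$, extend it by zero to $\mathbb{R}^{N+1}_{+}$, apply the sharp half-space trace inequality of Lemma~\ref{le4} to get the case $r=2^{*}_{s}$ directly, and then interpolate by H\"older exactly as you do. Either way the constant has the form $C(N,s)\,|\Omega|^{\frac{2}{r}-\frac{2}{2^{*}_{s}}}$, consistent with the stated dependence on $r,s,N,|\Omega|$.
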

\begin{lemma}\label{le4}
For every $w\in H^{s}(\mathbb{R}^{N+1}_{+})$ the sharp fractional Sobolev inequality for $N>2s$
and $s>0$
$$\Big(\int\limits_{\mathbb{R}^{N}}|u(x)|^{2^*_{s}}dx\Big)^{\frac{2}{2^*_{s}}}\leq S
\int\limits_{\mathbb{R}^{N+1}_{+}}y^{1-2s}|\nabla w(x,y)|^{2}dxdy,
$$
which holds with the constant
$$S=\frac{2^{-1}\pi^{-s}\Gamma(s)\Gamma(\frac{N-2s}{2})(\Gamma(N))^{\frac{2s}{N}}}
{\Gamma(\frac{2-2s}{2})\Gamma(\frac{N+2s}{2})(\Gamma(\frac{N}{2}))^{\frac{2s}{N}}},$$
where $u={ tr}_{\Omega}w$.
\end{lemma}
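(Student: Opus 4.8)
The plan is to decouple the inequality into two sharp facts: (i) among all functions on $\mathbb{R}^{N+1}_{+}$ with prescribed trace, the weighted Dirichlet energy is minimized by the $s$-harmonic extension, and (ii) the sharp fractional Sobolev inequality on $\mathbb{R}^{N}$ itself. First I would fix $u=\text{tr}\,w\in H^{s}_{0}(\Omega)$ and compare $w$ with its $s$-harmonic extension $E_{s}(u)$. Writing $w=E_{s}(u)+\phi$ with $\text{tr}\,\phi=0$, I would integrate by parts against the extension equation $\mathrm{div}(y^{1-2s}\nabla E_{s}(u))=0$; since $\phi$ vanishes on $\partial_{L}\mathcal{C}$ and at $\{y=0\}$, the mixed term drops out and
$$\int_{\mathbb{R}^{N+1}_+} y^{1-2s}|\nabla w|^{2}\,dx\,dy=\int_{\mathbb{R}^{N+1}_+} y^{1-2s}|\nabla E_{s}(u)|^{2}\,dx\,dy+\int_{\mathbb{R}^{N+1}_+} y^{1-2s}|\nabla\phi|^{2}\,dx\,dy.$$
The last integral is nonnegative, so the energy of $w$ dominates that of $E_{s}(u)$, and it suffices to prove the inequality for $w=E_{s}(u)$. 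Any extremal obtained in this special case is then automatically extremal in general.

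Next I would evaluate the extension energy by taking the Fourier transform in the tangential variable $x$. For each frequency $\xi$ the profile $\widehat{w}(\xi,y)$ solves the ordinary differential equation
$$\partial_{yy}\widehat{w}+\tfrac{1-2s}{y}\,\partial_{y}\widehat{w}-|\xi|^{2}\widehat{w}=0,\qquad \widehat{w}(\xi,0)=\widehat{u}(\xi),$$
whose decaying solution is expressed through a modified Bessel function. Substituting this profile into $\int y^{1-2s}(|\partial_{y}w|^{2}+|\nabla_{x}w|^{2})$ and using Plancherel's theorem yields the Caffarelli--Silvestre identity
$$\int_{\mathbb{R}^{N+1}_+} y^{1-2s}|\nabla E_{s}(u)|^{2}\,dx\,dy=\kappa_{s}\int_{\mathbb{R}^{N}}|\xi|^{2s}|\widehat{u}(\xi)|^{2}\,d\xi=\kappa_{s}\,\big\|(-\Delta)^{\frac{s}{2}}u\big\|_{L^{2}(\mathbb{R}^{N})}^{2},$$
with the explicit normalization constant $\kappa_{s}=2^{1-2s}\,\Gamma(1-s)/\Gamma(s)$. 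Thus the weighted half-space energy is, up to the factor $\kappa_{s}$, exactly the homogeneous $\dot H^{s}$ seminorm of the trace.

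Finally I would invoke the sharp fractional Sobolev inequality on $\mathbb{R}^{N}$ for $0<s<N/2$, namely
$$\Big(\int_{\mathbb{R}^{N}}|u|^{2^{*}_{s}}\,dx\Big)^{\frac{2}{2^{*}_{s}}}\leq \mathcal{S}_{N,s}\,\big\|(-\Delta)^{\frac{s}{2}}u\big\|_{L^{2}(\mathbb{R}^{N})}^{2},$$
whose sharp constant $\mathcal{S}_{N,s}$ and extremals $u(x)=c\,(1+|x|^{2})^{-\frac{N-2s}{2}}$ (and their translates/dilations) are known, equivalently through Lieb's sharp Hardy--Littlewood--Sobolev inequality in dual form. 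Combining this with the previous identity gives the asserted trace inequality with $S=\mathcal{S}_{N,s}/\kappa_{s}$; the extremals of the Sobolev inequality, lifted by $E_{s}$, saturate every intermediate step and so realize the constant $S$, proving sharpness and attainment. The remaining task is to verify that the product of Gamma factors in $\mathcal{S}_{N,s}/\kappa_{s}$ collapses to the stated closed form, the denominator factor $\Gamma(\tfrac{2-2s}{2})=\Gamma(1-s)$ and the numerator factor $\Gamma(s)$ arising precisely from the reciprocal of $\kappa_{s}$.

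The main obstacle I anticipate is bookkeeping rather than conceptual: pinning down $\kappa_{s}$ requires the small-$y$ asymptotics of the Bessel profile and careful attention to the Fourier-multiplier normalization that defines $(-\Delta)^{s/2}$ through the Dirichlet--to--Neumann map, so that $\kappa_{s}$ is consistent with the convention used in $\mathcal{S}_{N,s}$. Reconciling these normalizations and then reducing the resulting Gamma-function product to the displayed expression for $S$ is where the calculation must be executed with care.
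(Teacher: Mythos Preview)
The paper does not supply a proof of this lemma; it is stated without argument as a known sharp trace inequality and is later invoked in the Moser iteration. There is therefore no paper proof to compare against.

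Your outline is a correct and standard route to the result: minimize the weighted Dirichlet energy over extensions to reduce to $w=E_{s}(u)$, identify that minimal energy with $\kappa_{s}\|(-\Delta)^{s/2}u\|_{L^{2}}^{2}$ via the Caffarelli--Silvestre Fourier computation, and then apply the sharp fractional Sobolev inequality on $\mathbb{R}^{N}$ (equivalently Lieb's sharp HLS inequality). One small slip: the lemma is posed on the whole half-space $\mathbb{R}^{N+1}_{+}$, so in your first step the trace should be taken on $\mathbb{R}^{N}$ rather than on a bounded $\Omega$, and the references to $\partial_{L}\mathcal{C}$ and $H^{s}_{0}(\Omega)$ should be replaced by suitable decay at infinity; the orthogonality argument for $\phi$ then goes through by density of compactly supported $\phi$ vanishing on $\{y=0\}$. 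With that adjustment the plan is complete, and as you anticipate the only genuine work left is the constant bookkeeping matching $\mathcal{S}_{N,s}/\kappa_{s}$ to the displayed Gamma expression.
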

Theorem\ref{th1} and \ref{th2} will be proved by using a recent result
on the existence of at least three critical points by Ricceri \cite{b2009-1,b2009-2}.
For the reader's convenience, we describe it as follows.

 If $X$ is a real Banach space, we can denote by $\mathcal{X}$ the class of all
 function $\phi:\,X\to \mathbb{R}$ possessing the following property: if $\{u_{n}\}\subset X$
is a sequence converging weakly to $u\in X$ and
$\liminf\limits_{n\to \infty}\phi(u_{n})\leq \phi(u)$, then $\{u_{n}\}$ has a subsequence
converging strongly to $u$.

\begin{theorem}\label{th3}
Let $X$ be a separable and reflexive real Banach space; $I\subseteq\mathbb{R}$ an interval;
$\Phi:\,X\to \mathbb{R}$ a sequentially weakly lower semi-continuous $C^1$ functional,
belonging to $\mathcal{X}$, bounded on each bounded subset of $X$ and whose derivative
admits a continuous inverse on $X^*$; $J:\,X\to \mathbb{R}$ a $C^1$ functional with
compact derivative. Assume that, for each $\lambda\in I$, the functional $\Phi-\lambda J$
is coercive and has a strict local, not global minimum, say $\hat{u}_{\lambda}$.
Then, for each compact interval $[a,b]\subseteq I$ for which
$\sup\limits_{\lambda\in[a,b]}(\Phi(\hat{u}_{\lambda})-\lambda J(\hat{u}_{\lambda}))<+\infty$,
there exists $\gamma>0$ with the following property: for every $\lambda\in[a,b]$ and every
 $C^1$ functional $\Psi:\,X\to \mathbb{R}$ with compact derivative,
 there exists $\delta_{0}>0$ such that, for each $\mu \in[0\,,\,\delta_{0}]$, the equation
$$\Phi'(u)=\lambda J'(u)+\mu \Psi'(u)$$
has at least three solutions whose norm are less than $\gamma$.
\end{theorem}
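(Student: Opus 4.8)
The plan is to prove Theorem \ref{th3} by the classical ``two local minima plus mountain pass'' mechanism for obtaining three critical points, the delicate point being that the radius $\gamma$ has to be chosen uniformly in $\lambda\in[a,b]$ and independently of the perturbing functional $\Psi$. Fix $\lambda\in I$. A $C^1$ functional with compact derivative is sequentially weakly continuous, so $J$ is, and hence $\Phi-\lambda J$ is sequentially weakly lower semicontinuous; being moreover coercive on the reflexive space $X$, it attains a global minimum at some $\bar u_\lambda$. Since the hypothesized $\hat u_\lambda$ is a strict local minimum that is \emph{not} global, $\bar u_\lambda\neq\hat u_\lambda$ and $(\Phi-\lambda J)(\bar u_\lambda)<(\Phi-\lambda J)(\hat u_\lambda)$; thus $\Phi-\lambda J$ has two distinct local minima, one of them strict.

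I would then verify that $\Phi-\lambda J$ satisfies the Palais--Smale condition: if $(u_n)$ is a (PS) sequence, coercivity forces it to be bounded, so from $\Phi'(u_n)-\lambda J'(u_n)\to0$ and the compactness of $J'$ we get $\Phi'(u_n)\to\ell$ in $X^*$ along a subsequence, and the continuous inverse of $\Phi'$ then gives $u_n\to(\Phi')^{-1}(\ell)$ strongly (the property $\Phi\in\mathcal{X}$ yields the same conclusion). With (PS) available and two distinct local minima, one strict, a mountain-pass argument in a form valid around a strict local minimum (the Pucci--Serrin / Ghoussoub--Preiss version) produces a third critical point $u^*_\lambda$ with $(\Phi-\lambda J)(u^*_\lambda)\ge(\Phi-\lambda J)(\hat u_\lambda)>(\Phi-\lambda J)(\bar u_\lambda)$, hence $u^*_\lambda\notin\{\bar u_\lambda,\hat u_\lambda\}$. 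So $\Phi-\lambda J$ has at least three critical points for every $\lambda\in I$.

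The remaining, and in my view hardest, task is to make this uniform and stable under the perturbation $\mu\Psi$. Fixing a compact $[a,b]\subseteq I$ with $M:=\sup_{\lambda\in[a,b]}(\Phi(\hat u_\lambda)-\lambda J(\hat u_\lambda))<+\infty$, one shows the three critical points $\bar u_\lambda,\hat u_\lambda,u^*_\lambda$ may be taken inside a single ball $B_\rho$ for all $\lambda\in[a,b]$: the values $(\Phi-\lambda J)(\bar u_\lambda)\le(\Phi-\lambda J)(\hat u_\lambda)\le M$ and the mountain-pass levels are likewise $\le M$, so a compactness argument along sequences $\lambda_n\to\lambda_0$ in $[a,b]$ (using sequential weak lower semicontinuity, reflexivity, coercivity and (PS)) bounds the norms of all three points uniformly; one then fixes $\rho$ and sets $\gamma:=\rho+1$. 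Now given $\lambda\in[a,b]$ and a $C^1$ functional $\Psi$ with compact derivative, $\Phi-\lambda J-\mu\Psi$ still satisfies (PS) for every $\mu\ge0$, and on $B_\gamma$ it is a $C^1$-small perturbation of $\Phi-\lambda J$ once $\mu$ is small; a local deformation / topological-degree argument --- which, crucially, does not need global coercivity of the perturbed functional --- then shows the three critical points of $\Phi-\lambda J$ in $B_\rho$ persist as critical points of $\Phi-\lambda J-\mu\Psi$ inside $B_\gamma$ for all $\mu\in[0,\delta_0]$, with $\delta_0=\delta_0(\lambda,\Psi)>0$. Since $\Phi'(u)=\lambda J'(u)+\mu\Psi'(u)$ is exactly the Euler--Lagrange equation of $\Phi-\lambda J-\mu\Psi$, this gives the assertion. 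The real obstacle, and the point where Ricceri's general variational-principle machinery is genuinely needed, is precisely this coupling of a \emph{uniform} localization over the compact interval with the persistence of three critical points under arbitrary small compact perturbations, with $\gamma$ kept independent of $\Psi$.
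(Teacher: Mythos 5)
This statement is not proved in the paper at all: it is quoted verbatim as Ricceri's three critical points theorem from \cite{b2009-1,b2009-2}, so there is no internal proof to compare against, and your attempt must be judged on its own terms. Your outline has the right skeleton (two local minima plus a mountain-pass point for $\Phi-\lambda J$, uniform localization over $[a,b]$, persistence under the compact perturbation $\mu\Psi'$), but the steps that carry all the weight are asserted rather than proved, and one is asserted incorrectly. Your claim that $\Phi-\lambda J-\mu\Psi$ ``still satisfies (PS) for every $\mu\ge0$'' does not follow from your own argument: boundedness of (PS) sequences came from coercivity of $\Phi-\lambda J$, and nothing in the hypotheses prevents $\Psi$ from destroying coercivity for every $\mu>0$ (indeed, in the paper's application $\Psi$ has supercritical growth before truncation). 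Without (PS) for the perturbed functional you cannot invoke Pucci--Serrin or Ghoussoub--Preiss, and even if a mountain-pass critical point exists it is not automatically confined to $B_\gamma$, whereas the theorem requires all three solutions to have norm less than $\gamma$.

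The ``local deformation / topological-degree argument'' for persistence is precisely the content of Ricceri's proof and cannot be left as a gesture. In infinite dimensions a strict local minimum does not by itself yield a positive gap between the value at the minimizer and the infimum over a surrounding annulus (spheres are not weakly compact), so smallness of $\mu\sup_{B_\gamma}|\Psi|$ alone does not show that the local minimum survives the perturbation; this is exactly where the class $\mathcal{X}$ must be used (from $u_n\rightharpoonup u$ and $\liminf_n\Phi(u_n)\le\Phi(u)$ deduce strong convergence, hence a strictly positive infimum gap on annuli around $\hat u_\lambda$), and your sketch never invokes $\mathcal{X}$ at this point. Likewise the uniform bound on the three critical points over $\lambda\in[a,b]$ needs an explicit mechanism --- for instance the pointwise inequality $\Phi-\lambda J\ge\min\{\Phi-aJ,\,\Phi-bJ\}$ for $\lambda\in[a,b]$, which gives coercivity uniform in $\lambda$ and hence boundedness of the sublevel set at height $M$ --- rather than an unspecified compactness argument along sequences $\lambda_n\to\lambda_0$. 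As written, the proposal defers exactly the assertions the theorem is about; to complete it you would need either Ricceri's minimax construction or a fully localized argument on $\overline{B_\gamma}$ producing the third critical point inside the ball without global (PS).
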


\section{Proof of the main results}
Let
$$\Psi(u)=\frac{1}{2}\int\limits_{\mathcal{C}}y^{1-2s}|\nabla w|^{2}dxdy,
\quad J(w)=\int\limits_{\Omega\times\{0\}}F(x,w)dx.$$
Obviously, the condition $(f_3)$ implies
\begin{equation*}\label{eq10}
\theta=\sup\limits_{\Psi(w)\not\neq 0}\frac{J(w)}{\Psi(w)}>0.
\end{equation*}

\begin{lemma}\label{le1}
Let $f$ satisfy $(f_1)-(f_3)$. Then for every $\lambda\in (0\,,\,\infty)$, the functional $\Phi-\lambda J$
is sequentially weakly lower continuous and coercive on $H^{s}_{0,L}(\mathcal{C})$, and has a global minimizer $w_{\lambda}$.

\end{lemma}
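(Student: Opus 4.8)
The plan is to verify the two assertions---sequential weak lower semicontinuity and coercivity---separately, and then deduce the existence of a global minimizer from the direct method of the calculus of variations. Throughout I will work on the Hilbert space $X := H^{s}_{0,L}(\mathcal{C})$ with $\Phi(w) = \frac12\int_{\mathcal C}y^{1-2s}|\nabla w|^2\,dxdy = \|w\|_{X}^2/2$, and I will freely use the isometry $E_s$ together with the trace embedding $\text{tr}_\Omega\colon H^{s}_{0,L}(\mathcal{C})\hookrightarrow L^{r}(\Omega)$, which by property (ii) is \emph{compact} for every $r\in[2,2^{*}_{s})$. Note first that under $(f_1)$ and $(f_2)$ the function $F(x,t)$ is subquadratic at infinity: for every $\varepsilon>0$ there is $C_\varepsilon>0$ with $|f(x,t)|\le \varepsilon|t| + C_\varepsilon$ (from $(f_1)$ for large $|t|$, with $(f_3)$ giving $L^\infty$ control on bounded sets), hence $|F(x,t)|\le \frac{\varepsilon}{2}t^2 + C_\varepsilon|t|$ for all $t$ and a.e.\ $x$.

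For sequential weak lower semicontinuity: if $w_n\rightharpoonup w$ in $X$, then $\Phi(w_n)\to$ has $\liminf\Phi(w_n)\ge\Phi(w)$ because $\Phi$ is (a constant times) the square of the norm of a Hilbert space, hence convex and strongly continuous, therefore weakly lower semicontinuous. For the term $\lambda J$, the compact trace embedding gives $\text{tr}_\Omega w_n\to \text{tr}_\Omega w$ strongly in $L^{2}(\Omega)$ (and in $L^1(\Omega)$), and along a further subsequence pointwise a.e.\ with an $L^2$-dominating function; the growth bound $|F(x,t)|\le\frac{\varepsilon}{2}t^2+C_\varepsilon|t|$ then lets me invoke a generalized dominated convergence / Vitali argument to conclude $J(w_n)\to J(w)$. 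Thus $J$ is in fact sequentially weakly \emph{continuous}, so $\Phi-\lambda J$ is sequentially weakly lower semicontinuous for every $\lambda>0$.

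For coercivity: using the bound on $F$ and Lemma \ref{le3} with $r=2$, there is $C>0$ with
\[
(\Phi-\lambda J)(w)\;\ge\;\frac12\|w\|_X^2-\lambda\Big(\tfrac{\varepsilon}{2}\|\text{tr}_\Omega w\|_{L^2}^2+C_\varepsilon\|\text{tr}_\Omega w\|_{L^1}\Big)\;\ge\;\Big(\tfrac12-\tfrac{\lambda\varepsilon C}{2}\Big)\|w\|_X^2-\lambda C_\varepsilon C\|w\|_X.
\]
Choosing $\varepsilon = \varepsilon(\lambda)$ small enough that $\frac12-\frac{\lambda\varepsilon C}{2}>0$, the right-hand side tends to $+\infty$ as $\|w\|_X\to\infty$, which is coercivity. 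Finally, coercivity plus sequential weak lower semicontinuity on the reflexive space $X$ gives, by the direct method, a minimizing sequence that is bounded, hence has a weakly convergent subsequence whose weak limit $w_\lambda$ attains $\inf_X(\Phi-\lambda J)$; this is the desired global minimizer.

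The only genuinely delicate point is the passage $J(w_n)\to J(w)$, i.e.\ ensuring that the nonlinear term behaves continuously under weak convergence. This is where the compactness of the trace embedding into $L^r(\Omega)$ for $r<2^{*}_{s}$ is essential, and where the hypotheses $(f_1)$--$(f_2)$ (giving the subquadratic growth of $F$) together with the local $L^\infty$-bound in $(f_3)$ must be combined carefully to produce an integrable dominating function; everything else is a routine application of convexity and standard estimates. I would present the $F$-growth estimate as a short preliminary claim, then dispatch the three assertions in the order above.
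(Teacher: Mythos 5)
Your proposal is correct and follows essentially the same route as the paper: the growth bound $|F(x,t)|\le \frac{\varepsilon}{2}t^{2}+C_{\varepsilon}|t|$ derived from $(f_1)$ and $(f_3)$, coercivity by absorbing the quadratic term for $\varepsilon$ small depending on $\lambda$, weak continuity of $J$ via the compact trace embedding, weak lower semicontinuity of $\Phi$ as the squared norm, and the direct method. Your treatment of the passage $J(w_n)\to J(w)$ (dominated convergence/Vitali after extracting an a.e.\ convergent subsequence) is slightly more explicit than the paper's one-line appeal to compactness, but it is the same argument.
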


\begin{proof}
By $(f_1)$ and $(f_3)$, for any  $\varepsilon>0$, there exist $M_{0}>0$ and $C_{1}>0$
such that, for all $s\in H^{s}_{0,L}(\mathcal{C})$,
$$|f(x,s)|\leq \varepsilon|s|,\quad \forall\,|s|\geq M_{0}, $$
and
$$|f(x,s)|\leq C_{1},\quad \forall\,|s|\leq M_{0}+1.$$
So, for any $w\in H^{s}_{0,L}(\mathcal{C})$, we have
\begin{equation}\label{eq11}
|f(x,w)|\leq C_{1}+\varepsilon|w|,
\end{equation}
which implied that
\begin{equation*}\label{eq12}
|F(x,w)|\leq C_{1}|w|+\frac{\varepsilon}{2}|w|^{2}.
\end{equation*}
Thus, for all $w\in H^{s}_{0,L}(\mathcal{C})$, we obtain
\begin{equation*}\label{eq13}
\aligned
\Phi(w)-\lambda J(w)
&=\frac{1}{2}\int\limits_{\mathcal{C}}y^{1-2s}|\nabla w|^{2}dxdy
-\lambda\int\limits_{\Omega\times\{0\}}F(x,w)dx\\
&\geq \frac{1}{2}\int\limits_{\mathcal{C}}y^{1-2s}|\nabla w|^{2}dxdy
-\lambda\int\limits_{\Omega\times\{0\}}(C_{1}|w|+\frac{\varepsilon}{2}|w|^{2})dx\\
&=\frac{1}{2}\| w\|^{2}_{H^{s}_{0,L}(\mathcal{C})}
-\lambda\frac{\varepsilon}{2}\int\limits_{\Omega\times\{0\}}|w|^{2}dx
-\lambda C_{1}\int\limits_{\Omega\times\{0\}}|w|dx\\
&\geq \Big(\frac{1}{2}-\lambda\frac{\varepsilon C_{2}}{2}\Big)\| w\|^{2}_{H^{s}_{0,L}(\mathcal{C})}
-\lambda C_{1}C_{3}\|w\|_{H^{s}_{0,L}(\mathcal{C})},\\
\endaligned\end{equation*}
where constants $C_{2}>0$, $C_{3}>0$.
Let $\varepsilon>0$  small enough such that $\frac{1}{2}-\lambda\frac{\varepsilon C_{2}}{2}>0$, then we have
$$\Phi(w)-\lambda J(w)\to +\infty\quad \text{as}\quad \|w\|_{H^{s}_{0,L}(\mathcal{C})}
\to \infty.$$
Hence, $\Phi-\lambda J$ is coercive.

Moreover, from the embedding $H^{s}_{0,L}(\mathcal{C})\hookrightarrow L^{r}(\Omega)$ ($1\leq r<2^*_{s}$)
is compact and \eqref{eq11}, $J$ is weakly continuous.
Obviously, $$\Phi(u)=\frac{1}{2}\int\limits_{\mathcal{C}}y^{1-2s}|\nabla w|^{2}dxdy=
\frac{1}{2}\|w\|^{2}_{H^{s}_{0,L}(\mathcal{C})}$$
is weakly lower semi-continuous on $H^s_{0,L}(\mathcal{C})$. We can deduce that $\Phi-\lambda J$ is a sequentially weakly lower semi-continuous.
So, $\Phi-\lambda J$ has a global minimizer $w_{\lambda}\in H^s_{0,L}(\mathcal{C})$. The proof is completed.
\end{proof}

Next, we will show that $\Phi-\lambda J$ has a strictly local,
not global minimizer for some $\lambda$,
when $f$ satisfies $(f_1)-(f_3)$.

\begin{lemma}\label{le2}
Let $f$ satisfy $(f_1)-(f_3)$. Then
\begin{itemize}
\item[{\em (i)}] 0 is a strict local minimizer of the functional $\Phi-\lambda J$ for $\lambda\in (0\,,\,+\infty)$.
\item[{\em (ii)}] $w_{\lambda}\not\neq 0$, i.e., 0 is not the global minimizer $w_{\lambda}$ for
$\lambda\in (\frac{1}{\theta}\,,\,+\infty)$, where $w_{\lambda}$ is given by Lemma \ref{le1}.
\end{itemize}
\end{lemma}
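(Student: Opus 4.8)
\textbf{Proof proposal for Lemma \ref{le2}.}

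The plan is to prove the two assertions essentially by elementary estimates on the functional $\Phi-\lambda J$ near the origin and at a well-chosen test function. For part (i), I would use the growth bounds on $F$ coming from $(f_1)$--$(f_2)$--$(f_3)$. Condition $(f_2)$ says $f(x,t)/|t|\to 0$ as $|t|\to 0$ uniformly in $x$, so for any $\varepsilon>0$ there is $\rho>0$ with $|f(x,t)|\le\varepsilon|t|$ for $|t|\le\rho$; combined with the bound $|f(x,t)|\le C_1+\varepsilon|t|$ from Lemma \ref{le1} (valid for all $t$), one gets, for any $q\in(2,2^*_s)$, an estimate of the shape $|F(x,t)|\le\frac{\varepsilon}{2}|t|^2+C_\varepsilon|t|^{q}$ for all $t$. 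Then for $w\in H^s_{0,L}(\mathcal{C})$, using the trace inequality of Lemma \ref{le3} for exponents $2$ and $q$,
$$
\Phi(w)-\lambda J(w)\ge \frac12\|w\|^2_{H^s_{0,L}(\mathcal{C})}-\lambda\frac{\varepsilon}{2}C\|w\|^2_{H^s_{0,L}(\mathcal{C})}-\lambda C_\varepsilon C'\|w\|^{q}_{H^s_{0,L}(\mathcal{C})}.
$$
Choosing $\varepsilon$ small so that $\frac12-\lambda\frac{\varepsilon}{2}C=:c_0>0$, and then restricting to $\|w\|_{H^s_{0,L}(\mathcal{C})}=r$ with $r$ small enough that $\lambda C_\varepsilon C' r^{q-2}<c_0/2$, we obtain $\Phi(w)-\lambda J(w)\ge \frac{c_0}{2}r^2>0=(\Phi-\lambda J)(0)$ on the sphere of radius $r$. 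Since $q>2$, this strict positivity persists on the whole punctured ball; hence $0$ is a strict local minimizer. Note this argument works for every $\lambda\in(0,+\infty)$, as claimed.

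For part (ii), the idea is that when $\lambda>1/\theta$ the origin cannot be the global minimum because there is a direction along which $\Phi-\lambda J$ goes strictly negative. By the definition of $\theta$ as a supremum, and since $(f_3)$ guarantees $\theta>0$ and that the supremum is actually attained or approached, pick $v\in H^s_0(\Omega)$ with $\int_\Omega F(x,v)\,dx>0$ and with the Rayleigh-type quotient $\frac{\int_\Omega|(-\Delta)^{s/2}v|^2dx}{2\int_\Omega F(x,v)dx}$ as close to $1/(2\theta)$ as we wish; equivalently, working on the cylinder with $\tilde v=E_s(v)$, the quotient $\Psi(\tilde v)/J(\tilde v)$ can be taken arbitrarily close to $1/\theta$ (note $\Psi=\Phi$ here since both equal $\frac12\|\cdot\|^2_{H^s_{0,L}}$). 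Then
$$
\Phi(\tilde v)-\lambda J(\tilde v)=\Psi(\tilde v)-\lambda J(\tilde v)=J(\tilde v)\Big(\frac{\Psi(\tilde v)}{J(\tilde v)}-\lambda\Big),
$$
and for $\lambda>1/\theta$ we may choose $v$ so that $\Psi(\tilde v)/J(\tilde v)<\lambda$, making the right-hand side strictly negative. Hence $\inf(\Phi-\lambda J)<0=(\Phi-\lambda J)(0)$, so the global minimizer $w_\lambda$ from Lemma \ref{le1} satisfies $(\Phi-\lambda J)(w_\lambda)<0$ and in particular $w_\lambda\neq 0$.

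The main obstacle, and the point requiring care, is part (i): one must produce a genuinely \emph{superquadratic} remainder term (the $|t|^q$ term with $q>2$) rather than just the linear-plus-quadratic bound used in Lemma \ref{le1}, since a linear term like $C_1|w|$ would dominate near $0$ and could make $\Phi-\lambda J$ negative on small spheres. This is exactly where hypothesis $(f_2)$ is essential: it upgrades the behaviour of $F$ near $0$ from $O(|t|)$ to $o(|t|^2)$, which is what kills the lower-order term on small balls. The interpolation between the near-zero bound $|f(x,t)|\le\varepsilon|t|$ and the global bound, to get $|F(x,t)|\le\frac{\varepsilon}{2}|t|^2+C_\varepsilon|t|^q$ uniformly in $x$, is routine but must be written out. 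Part (ii) is comparatively soft, requiring only the definition of $\theta$ and the identity $\Phi=\Psi$ on $H^s_{0,L}(\mathcal{C})$; the only subtlety is ensuring the test function can be chosen in (the image under $E_s$ of) $H^s_0(\Omega)$ with $J>0$ and quotient below $\lambda$, which follows directly from $\lambda>1/\theta$ and the supremum characterization of $\theta$.
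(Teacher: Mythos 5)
Your proposal is correct and follows essentially the same route as the paper: part (i) rests on the same interpolated growth bound (the paper writes $|f(x,w)|\le\varepsilon|w|+|w|^{r}$ with $r\in(1,2^{*}_{s}-1)$ and deduces $J(w)/\Phi(w)\to 0$ as $\|w\|_{H^{s}_{0,L}(\mathcal{C})}\to 0$, which is just a repackaging of your small-ball estimate with the superquadratic remainder), and part (ii) is the identical test-function argument from the supremum characterization of $\theta$. No gaps.
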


\begin{proof}
Firstly, we prove that
\begin{equation*}\label{eq14}
\lim\limits_{\|w\|_{H^{s}_{0,L}(\mathcal{C})}\to 0}\frac{J(w)}{\Phi(w)}=0,\quad \forall w\in H^{s}_{0,L}(\mathcal{C}).
\end{equation*}
In fact, by $(f_2)$, for any $ \varepsilon>0$, there exists a $\delta>0$, such that
\begin{equation}\label{eq15}
|f(x,w)|\leq \varepsilon|w|,\quad |w|<\delta.
\end{equation}
Considering the inequality \eqref{eq15}, $(f_1)$ and $(f_3)$, there exists
$r\in (1\,,\,2^*_{s}-1)$ such that
\begin{equation}\label{eq16}
|f(x,w)|\leq \varepsilon|w|+|w|^{r}.
\end{equation}
Then from Lemma \ref{le3}, there exist $C_{4}, C_{5}>0$, such that
\begin{equation*}\label{eq17}
|J(w)|\leq \varepsilon\, C_{4}\|w\|^{2}_{H^{s}_{0,L}(\mathcal{C})}+C_{5}\,\|w\|^{r+1}_{H^{s}_{0,L}(\mathcal{C})}.
\end{equation*}
This implies
\begin{equation*}\label{eq17}
\lim\limits_{\|w\|_{H^{s}_{0,L}(\mathcal{C})}\to 0}\frac{J(w)}{\Phi(w)}=0.
\end{equation*}

Next, we will prove (i) and (ii).
\begin{itemize}
\item[(i)]For $\lambda\in (0\,,\,+\infty)$, since
$\lim\limits_{\|w\|_{H^{s}_{0,L}(\mathcal{C})}\to 0}\frac{J(w)}{\Phi(w)}=0< \frac{1}{\lambda}$
and
$\Phi(w)>0$ for each $w\not\neq 0$ in some neighborhood $U$ of 0, there exists a neighborhood
$V\subseteq U$ of 0 such that $$\Phi(w)-\lambda J(w)>0,\quad\forall\,\, w\in V\setminus\{0\}.$$
Hence, 0 is a strict local minimum of $\Phi-\lambda J$.
\item[(ii)] For $\lambda\in (\frac{1}{\theta}\,,\,+\infty)$, from the definition of $\theta$,
there exists $\hat{w}\in H^{s}_{0,L}(\mathcal{C})$such that $\Phi(\hat{w})>0$, $J(\hat{w})>0$
and $\frac{J(\hat{w})}{\Phi(\hat{w})}>\frac{1}{\lambda}$. So we have
 $$  \Phi(\hat{w})-\lambda J(\hat{w})<0=\Phi(0)-\lambda J(0).$$
This yields $0$ is not a global minimum of $\Phi-\lambda J$.
\end{itemize}
This completes the proof.
\end{proof}

Let $K>0$ be a real number, whose value will be fixed latter.
Define the truncation of $|w|^{p-2}w$ with $p>2^{*}_{s}$, be given by
\begin{equation*}\label{eq18}
g_{K}(w)=\left\{\aligned
&|w|^{p-2}w,\quad   &\text{if}\quad 0\leq|w|\leq  K,\\
&K^{p-q}|w|^{q-2}w, \quad   &\text{if}\quad |w|> K,
\endaligned\right.
\end{equation*}
where $q\in(2\,,\,2_{s}^{*})$. Then $g_{K}(w)$ satisfies
$$|g_{K}(w)|\leq K^{p-q}|w|^{q-1},$$
for $K$ large enough.
Then, we study the truncated problem
\begin{equation}\label{eq19}
\left\{
\aligned
&div(y^{1-2s}\nabla w)=0,\quad &\text{in}\,\,\mathcal{C},\\
&w=0,&\text{on}\,\,\partial_{L}\mathcal{C},\\
&\partial^{s}_{\nu}w=\lambda f(x,w)+\mu g_{K}(w),\quad &\text{in}\,\, \Omega\times\{0\},\\
\endaligned\right.
\end{equation}

We say that $w\in H^{s}_{0,L}(\mathcal{C})$ is a weak solution of the  problem \eqref{eq19} if
\begin{equation}\label{eq19*}
\int\limits_{\mathcal{C}}y^{1-2s}\nabla w\cdot\nabla \varphi dxdy
=\lambda\int\limits_{\Omega\times\{0\}} f(x,w)\varphi dx +\mu\int\limits_{\Omega\times\{0\}} g_{K}(w)\varphi dx
\end{equation}
for every $\varphi\in H^{s}_{0,L}(\mathcal{C})$.

Let
$$\Psi(u)=\int\limits_{\Omega\times\{0\}}G_{K}(w)dx,$$
where $G_{K}(w)=\int^{u}_{0}g_{K}(t)dt$. So
 from $|g_{K}(w)|\leq K^{p-q}|w|^{q-1}$, $2<q<2^*_{s}$, we get that
 $g_{K}(w)$ is a super-linear function with subcritical growth, then
$\Psi(u)$ has a compact derivative in $H^{s}_{0,L}(\mathcal{C})$. Moreover,
for each compact interval $[a,b]\subset(\frac{1}{\theta},+\infty)$, $\lambda\in[a\,,\,b]$.
From \eqref{eq16}, we have
then $J(w)$ has a compact derivative in $H^{s}_{0,L}(\mathcal{C})$ too.
Therefore, it is easy to see that the functional
$$\mathcal{E}(w)=\Phi(w)-\lambda J(w)-\mu\Psi(w)\quad \forall \,w\in H^{s}_{0,L}(\mathcal{C})$$ is
 $C^{1}$ and its derivative is given by

$$\langle \mathcal{E}'(w),\varphi\rangle
=\int\limits_{\mathcal{C}}y^{1-2s}\nabla w\nabla \varphi dxdy
-\lambda\int\limits_{\Omega\times\{0\}}f(x,w)\varphi dx
-\mu\int\limits_{\Omega\times\{0\}}g_{K}(w)\varphi dx, $$
for all $\varphi\in H^{s}_{0,L}(\mathcal{C})$.

By Lemma \ref{le1} and Lemma \ref{le2},
all the hypotheses of Theorem \ref{th3} are satisfied. So there exists $\gamma >0$
with the following property: for every $\lambda\in[a,b]\subset(\frac{1}{\theta},+\infty)$,
there exists $\delta_{0}>0$, such that for $\mu\in[0,\delta_{0}]$,
 the problem \eqref{eq19} has at least three solutions $w_{0}$, $w_{1}$ and $w_{2}$ in
  $H^{s}_{0,L}(\mathcal{C})$ and
 $$ \| w_{k}\|_{H^{s}_{0,L}(\mathcal{C})}\leq\gamma,\quad k=0,1,2,$$
  where $\gamma$ depends on $\lambda$, but does depend on $\mu$
 or $K$.

 If the three solutions $w_{k}$, $k=0,1,2$, satisfy
 \begin{equation}\label{eq20}
|w_{k}|\leq K, \quad \text{a.e.}\,\, (x,y)\in\Omega\times(0\,,\,\infty),\quad k=0,1,2.
\end{equation}
Then in the view of the definition $g_{K}$,
we have $g_{K}(x,w)=\mu|w|^{p-2}w$ and
therefore $w_{k}$, $k=0,1,2,$ are also solutions of the original problem \eqref{eq02}.
Thus, in order to prove Theorem \ref{th1},
it suffices to show that exists $\delta_{0}>0$, such that for $\mu\in[0,\delta_{0}]$,
the solutions obtained by Theorem \ref{th3} satisfy the inequality \eqref{eq20}.

{\bf Proof of theorem \ref{th1}.}
Our aim is to show that exits $\delta_{0}>0$,
such that for $\mu\in[0,\delta_{0}]$, the solution $w_{k}$, $k=0,1,2$,
 satisfy the inequality \eqref{eq20}. To save notation, we will denote $w:=w_{k}$, $k=0,1,2$.

Set $w_{+}=\max\{w\,,\,0\},w_{-}=-\min\{w\,,\,0\}$. Then $|w|=w_{+}+w_{-}$.
We can argue with the positive and negation part of $w$ separately.

We first deal with $w_{+}$. For each $L>0$, we define the following functions
\begin{equation*}\label{eq21}
w_{L}=\left\{\aligned
&w_{+},\quad  &\text{if} \,\,\,\, w_{+}\leq L,\\
&L, \quad  &\text{if}\,\, \,\,w_{+}> L.
\endaligned\right.
\end{equation*}

For $\beta>1$ to be determined, we choose in \eqref{eq19*} that
$$\varphi=w_{L}^{2(\beta-1)}w_{+},$$
and since
$$\nabla\varphi=w_{L}^{2(\beta-1)}\nabla w_{+}+2(\beta-1)w_{L}^{2(\beta-1)-1}w_{+}\nabla w_{L},$$
we obtain
\begin{equation}\label{eq23}
\aligned
&\int\limits_{\mathcal{C}}y^{1-2s}\nabla w\nabla\varphi dxdy\\
&=\int\limits_{\mathcal{C}}y^{1-2s}(\nabla (w_{+}- w_{-}))\nabla (w_{L}^{2(\beta-1)}w_{+})dxdy\\
&=\int\limits_{\mathcal{C}}y^{1-2s}(\nabla w_{+}-\nabla w_{-})(w_{L}^{2(\beta-1)}\nabla w_{+}+
2(\beta-1)w_{L}^{2(\beta-1)-1}w_{+}\nabla w_{L})dxdy\\
&=\int\limits_{\mathcal{C}}y^{1-2s}\Big(|\nabla w_{+}|^{2}w_{L}^{2(\beta-1)}
+2(\beta-1)w_{L}^{2(\beta-1)-1}w_{+}\nabla w_{L}\nabla w_{+}\Big)dxdy\\
&=\int\limits_{\mathcal{C}}y^{1-2s}w_{L}^{2(\beta-1)}|\nabla w_{+}|^{2}dxdy
+2(\beta-1)\int\limits_{\mathcal{C}}y^{1-2s}w_{L}^{2(\beta-1)-1}w_{+}\nabla w_{L}\nabla w_{+}dxdy.
\endaligned
\end{equation}
From the definition of $w_{L}$, we have
\begin{equation}\label{eq24}
\aligned
&2(\beta-1)\int\limits_{\mathcal{C}}y^{1-2s}w_{L}^{2(\beta-1)-1}w_{+}\nabla w_{L}\nabla w_{+}dxdy\\
&=2(\beta-1)\int\limits_{\{w_{+}<L\}}y^{1-2s}\,w_{L}^{2(\beta-1)-1}w_{+}\nabla w_{L}\nabla w_{+}dxdy\\
&=2(\beta-1)\int\limits_{\{w_{+}<L\}}y^{1-2s}\,w_{+}^{2(\beta-1)}|\nabla w_{+}|^{2}dxdy\\
&\geq 0.\\
\endaligned
\end{equation}

Set
$$h_{K}(x,w)=\lambda f(x,w)+\mu g_{K}(x,w),\quad \forall w\in H^{s}_{0,L}(\mathcal{C}).$$
From \eqref{eq16} and $|g_{K}(x,w)|\leq K^{p-q}|w|^{q-1}$, we can choose constant $C_6>0$
such that
\begin{equation}\label{eq26}
|h_{K}(x,w)|\leq C_{6}|w|+\mu K^{p-q}|w|^{q-1}.
\end{equation}
We deduce from \eqref{eq19*}, \eqref{eq23}, \eqref{eq24} and \eqref{eq26} for $\beta>1$ that
\begin{equation}\label{eq27}
\aligned
\int\limits_{\mathcal{C}}y^{1-2s} w_{L}^{2(\beta-1)}|\nabla w_{+}|^{2}dxdy
&=\int\limits_{\Omega\times\{0\}} h_{K}(x,w)\varphi dx
\leq\int\limits_{\Omega\times\{0\}}|h_{K}(x,w)\varphi|dx\\
&\leq\int\limits_{\Omega\times\{0\}}\Big(C_{6}|w|+\mu K^{p-q}|w|^{q-1}\Big)w_{L}^{2(\beta-1)}w_{+}dx\\
&=\int\limits_{\Omega\times\{0\}}\Big(C_{6}(w_{+}+w_{-})
+\mu K^{p-q}\,(w_{+}+w_{-})^{q-1}\Big)w_{L}^{2(\beta-1)}w_{+}dx\\
&=\int\limits_{\Omega\times\{0\}}\Big(C_{6}\,w_{+}^{2}w_{L}^{2(\beta-1)}
+\mu K^{p-q}\, w_{+}^{q}\,w_{L}^{2(\beta-1)}\Big)dx.
\endaligned
\end{equation}

Let $\hat{w}_{L}=w_{+}\,w_{L}^{\beta-1}$, we have
$$\nabla \hat{w}_{L}=w_{L}^{\beta-1}\,\nabla w_{+}+(\beta-1)w_{+}\,w_{L}^{\beta-2}\,\nabla w_{L}.$$
By the Sobolev embedding  theorem,
\begin{equation}\label{eq28}
\aligned
&\Big(\int\limits_{\Omega\times\{0\}}|\hat{w}_{L}|^{2^{*}_{s}}dx\Big)^{\frac{2}{2^{*}_{s}}}
\leq S\,\int\limits_{\mathcal{C}}y^{1-2s}|\nabla \hat{w}_{L}|^{2}dxdy\\
&=C\int\limits_{\mathcal{C}}y^{1-2s}|w_{L}^{\beta-1}\nabla w_{+}+
(\beta-1)w_{+}\,w_{L}^{\beta-2}\nabla w_{L}|^{2}dxdy\\
&\leq 2\,S\,\Big(\int\limits_{\mathcal{C}}y^{1-2s}|(\beta-1)w_{+}w_{L}^{\beta-2}\nabla w_{L}|^{2}dxdy
+\int\limits_{\mathcal{C}}y^{1-2s}|w_{L}^{\beta-1}\nabla w_{+}|^{2}dxdy\Big)\\
&=2\,S\,\Big(\int\limits_{\mathcal{C}}y^{1-2s}(\beta-1)^{2}|w_{L}|^{2(\beta-1)}|\nabla w_{+}|^{2}dxdy
+\int\limits_{\mathcal{C}}y^{1-2s}|w_{L}^{\beta-1}\nabla w_{+}|^{2}dxdy)\\
&\leq 2S\Big((\beta-1)^{2}\int\limits_{\mathcal{C}}y^{1-2s}w_{L}^{2(\beta-1)}|\nabla w_{+}|^{2}dxdy
+\int\limits_{\mathcal{C}}y^{1-2s}w_{L}^{2(\beta-1)}|\nabla w_{+}|^{2}dxdy\Big)\\
&=2S\,\Big((\beta-1)^{2}+1\Big)\int\limits_{\mathcal{C}}y^{1-2s}w_{L}^{2(\beta-1)}|\nabla w_{+}|^{2}dxdy\\
&=2S\,\beta^{2}\Big((\frac{\beta-1}{\beta})^{2}+\frac{1}{\beta^{2}}\Big)
\int\limits_{\mathcal{C}}y^{1-2s}w_{L}^{2(\beta-1)}|\nabla w_{+}|^{2}dxdy,
\endaligned
\end{equation}
where $S>0$ is the Sobolev embedding constant.

Since $\beta>1$, we have $\frac{1}{\beta^{2}}<1$
and $(\frac{\beta-1}{\beta})^{2}<1$. From \eqref{eq27} and \eqref{eq28}, we get
\begin{equation}\label{eq30}\aligned
&2S\,\beta^{2}\Big((\frac{\beta-1}{\beta})^{2}+\frac{1}{\beta^{2}}\Big)
\int\limits_{\mathcal{C}}y^{1-2s}w_{L}^{2(\beta-1)}|\nabla w_{+}|^{2}dxdy\\
&< 4\,S\beta^{2}\int\limits_{\mathcal{C}}y^{1-2s}w_{L}^{2(\beta-1)}|\nabla w_{+}|^{2}dxdy\\
&\leq 4\,S\beta^{2}\int\limits_{\Omega\times\{0\}}(C_{6}\,w^{2}_{+}w^{2(\beta-1)}_{L}
+\mu K^{p-q}w_{+}^{q}w^{2(\beta-1)}_{L})dx.\\
\endaligned
\end{equation}

From the Sobolev embedding $H^{s}_{0,L}(\mathcal{C})\hookrightarrow L^{2^{*}_{s}}(\Omega)$ and
$\|w_{+}\|_{H^{s}_{0,L}(\mathcal{C})}\leq \gamma$, we have
\begin{equation}\label{eq31}
\Big(\int\limits_{\Omega\times\{0\}}|w_{+}|^{2^*_{s}}dx\Big)^{\frac{2}{2^*_{s}}}
\leq S\, \int\limits_{\mathcal{C}}y^{1-2s}|\nabla w_{+}|^{2}dxdy\leq S\,\gamma.
\end{equation}

Let $t=\frac{2\,2^{*}_{s}}{2^{*}_{s}-q+2}$.
Since $w^{q}_{+}\,w^{2(\beta-1)}_{L}=w^{q}_{+}\hat{w}^{2}_{L}w_{+}^{-2}
=w^{q-2}_{+}\hat{w}^{2}_{L}$
and $\hat{w}_{L}^{2}=w^{2}_{+}w^{2(\beta-1)}_{L}$,
we can use the H\"{o}lder's inequality, \eqref{eq28}, \eqref{eq30} and \eqref{eq31} to conclude that,
whenever $\hat{w}_{L}(\cdot,0)\in L^{t}(\Omega)$, it holds
\begin{equation*}\label{eq32}
\aligned
&\Big(\int\limits_{\Omega\times\{0\}}|\hat{w}_{L}|^{2^*_{s}}dx\Big)^{\frac{2}{2^*_{s}}}\\
&\leq 4S\,\beta^{2}\int\limits_{\Omega\times\{0\}}\Big(C\,w^{2}_{+}w^{2(\beta-1)}_{L}
+\mu K^{p-q}w^{q}_{+}w^{2(\beta-1)}_{L}\Big)dx,\\
&=4 S\,\beta^{2}\Big(C\int\limits_{\Omega\times\{0\}}\hat{w}^{2}_{L}dx
+\mu K^{p-q}\int\limits_{\Omega\times\{0\}}w^{q-2}_{+}\hat{w}^{2}_{L}dx\Big),\\
&\leq4S\,\beta^{2} \Big[|\Omega|^{\frac{q-2}{2^{*}_{s}}}
\Big(\int\limits_{\Omega\times\{0\}}\hat{w}^{t}_{L}dx\Big)^{\frac{2}{t}}
+\mu K^{p-q}\Big(\int\limits_{\Omega\times\{0\}}|w_{+}|^{2^{*}_{s}}dx\Big)^{\frac{q-2}{2^{*}_{s}}}
(\int\limits_{\Omega\times\{0\}}\hat{w}^{t}_{L}dx\Big)^{\frac{2}{t}}\Big]\\
&\leq 4S\,\beta^{2}\Big(|\Omega|^{\frac{q-2}{2^{*}_{s}}}+\mu K^{p-q}(S\,\gamma)^{\frac{q-2}{2}}\Big)\|\hat{w}_{L}\|^{2}_{L^{t}}.
\endaligned
\end{equation*}

Set $\beta:=\frac{2^{*}_{s}}{t}=1+\frac{2^*_{s}-q}{2}>1$.
By the definition of $w_{L}$, we have $w_{L}\leq w_{+}$, then
we conclude that $\hat{w}_{L}(\cdot,0)\in L^{t}(\Omega)$,
whenever $(w_{+}(\cdot,0))^{\beta}\in L^{t}(\Omega)$.
If this is the case, it follow from the above inequality that
\begin{equation*}\label{eq33}\aligned
\Big(\int\limits_{\Omega\times\{0\}}|\hat{w}_{L}|^{2^*_{s}}dx\Big)^{\frac{2}{2^*_{s}}}
&=\Big(\int\limits_{\Omega\times\{0\}}w_{L}^{2^{*}_{s}(\beta-1)}w_{+}^{2^{*}_{s}}dx\Big)^{\frac{2}{2^{*}_{s}}}\\
&\leq 4S\beta^{2}\Big(|\Omega|^{\frac{q-2}{2^{*}_{s}}}
+\mu K^{p-q}(S\,\gamma)^{\frac{q-2}{2}}\Big)
\Big(\int\limits_{\Omega\times\{0\}}|w_{L}^{\beta-1}\,w_{+}|^{t}dx\Big)^{\frac{2}{t}}.
\endaligned
\end{equation*}
By Fatou's Lemma in the variable $L$, we get
\begin{equation*}\label{eq33*}
\Big(\int\limits_{\Omega\times\{0\}}w_{+}^{2^{*}_{s}\,\beta}dx\Big)^{\frac{2\beta}{2^{*}_{s}\beta}}
\leq 4\,S\beta^{2}C_{\mu,K}\Big(\int\limits_{\Omega\times\{0\}}|w_{+}|^{t\beta }dx\Big)^{\frac{2\beta}{t\beta}},
\end{equation*}
i.e.,
\begin{equation}\label{eq33**}
\Big(\int\limits_{\Omega\times\{0\}}w_{+}^{2^{*}_{s}\,\beta}dx\Big)^{\frac{1}{2^{*}_{s}\beta}}
\leq \Big(4S\,C_{\mu,K}\Big)^{\frac{1}{2\beta}}\beta^{\frac{1}{\beta}}
\Big(\int\limits_{\Omega\times\{0\}}|w_{+}|^{t\beta }dx\Big)^{\frac{1}{t\beta}},
\end{equation}
where $C_{\mu,K}=|\Omega|^{\frac{q-2}{2^{*}_{s}}}+\mu K^{p-q}(S\,\gamma)^{\frac{q-2}{2}}$.

Since $\beta=\frac{2^{*}_{s}}{t}>1$ and $w_{+}(\cdot,0)\in L^{2^{*}_{s}}(\Omega)$,
the inequality \eqref{eq33**} holds for this choice of $\beta$.
Therefore, from $\beta^{2}\,t=\beta 2^{*}_{s}$, we have that the inequality \eqref{eq33**}
 also holds with $\beta$ replaced by $\beta^{2}$. Hence
 \begin{equation*}\label{eq35}
 \aligned
 \Big(\int\limits_{\Omega\times\{0\}}w_{+}^{2^{*}_{s}\,\beta^{2}}dx\Big)^{\frac{1}{2^{*}_{s}\beta^{2}}}
 &\leq \Big(4S\,C_{\mu,K}\Big)^{\frac{1}{2\beta^{2}}}(\beta^{2})^{\frac{1}{\beta^{2}}}
\Big(\int\limits_{\Omega\times\{0\}}|w_{+}|^{t\beta^{2} }dx\Big)^{\frac{1}{t\beta^{2}}}\\
&=\Big(4S\,C_{\mu,K}\Big)^{\frac{1}{2\beta^{2}}}(\beta^{2})^{\frac{1}{\beta^{2}}}
\Big(\int\limits_{\Omega\times\{0\}}|w_{+}|^{2^*_{s}\beta }dx\Big)^{\frac{1}{2^{*}_{s}\beta}}\\
&\leq \Big(4S\,C_{\mu,K}\Big)^{\frac{1}{2\beta^{2}}}(\beta^{2})^{\frac{1}{\beta^{2}}}
\Big(4S\,C_{\mu,K}\Big)^{\frac{1}{2\beta}}\beta^{\frac{1}{\beta}}
\Big(\int\limits_{\Omega\times\{0\}}|w_{+}|^{t\beta }dx\Big)^{\frac{1}{t\beta}}\\
&= \Big(4S\,C_{\mu,K}\Big)^{\frac{1}{2}(\frac{1}{\beta^{2}}+\frac{1}{\beta})}
\beta^{\frac{2}{\beta^{2}}+\frac{1}{\beta}}
\Big(\int\limits_{\Omega\times\{0\}}|w_{+}|^{t\beta }dx\Big)^{\frac{1}{t\beta}}
\endaligned
\end{equation*}
By iterating this process and $\beta\,t=2^{*}_{s}$, we obtain
 \begin{equation}\label{eq36}
 \Big(\int\limits_{\Omega\times\{0\}}w_{+}^{2^{*}_{s}\,\beta^{m}}dx\Big)^{\frac{1}{2^{*}_{s}\beta^{m}}}
\leq
\Big(4S\,C_{\mu,K}\Big)^{\frac{1}{2}(\frac{1}{\beta^{m}}+\cdot\cdot\cdot+\frac{1}{\beta^{2}}+\frac{1}{\beta})}
\beta^{\frac{m}{\beta^{m}}+\cdot\cdot\cdot+\frac{2}{\beta^{2}}+\frac{1}{\beta}}
\Big(\int\limits_{\Omega\times\{0\}}|w_{+}|^{2^*_{s} }dx\Big)^{\frac{1}{2^*_{s}}}.
\end{equation}

Taking the limit as $m\rightarrow\infty$ in \eqref{eq36}, we have
\begin{equation*}\label{eq37}
\|w_{+}\|_{L^{\infty}}
\leq(4S\,C_{\mu,K})^{\theta_{1}}\beta^{\theta_{2}}\|w_{+}\|_{L^{2^*_{s}}}
\leq(4S\,C_{\mu,K})^{\theta_1}\beta^{\theta_2} (S\,\gamma)^{\frac{1}{2}},
\end{equation*}
where $\theta_1=\frac{1}{2}\sum\limits^{\infty}_{m=1}\frac{1}{\beta^{m}}$,
$\theta_2=\sum\limits^{\infty}_{m=1}\frac{m}{\beta^{m}}$ and $\beta>1$.

Next, we will find some suitable value of $K$ and $\mu$,
such that the inequality
\begin{equation}\label{eq38}
(4\,S\,C_{\mu,K})^{\theta_1}\beta^{\theta_2} (S\,\gamma)^{\frac{1}{2}}\leq\frac{K}{2}
\end{equation}
holds. From \eqref{eq38}, we get
\begin{equation*}\label{eq39}
C_{\mu,K}=|\Omega|^{\frac{q-2}{2^{*}_{s}}}+\mu K^{p-q}(S\,\gamma)^{\frac{q-2}{2}}
\leq\frac{1}{4S}\Big(\frac{K}{2(\gamma S)^{\frac{1}{2}}\beta^{\theta_2}}\Big)^{\frac{1}{\theta_1}}.
\end{equation*}
Then, choose $K$ to satisfy the inequality
\begin{equation*}\label{eq40}
\frac{1}{4\,S}\Big(\frac{K\,}{2(S\gamma)^{\frac{1}{2}} \beta^{\theta_2}}\Big)^{\frac{1}{\theta_1}}
-|\Omega|^{\frac{q-2}{2^{*}_{s}}}>0,
\end{equation*}
and fix $\mu_{0}$ such that
\begin{equation*}\label{eq41}
0<\mu_{0}<\mu':=\frac{1}{K^{p-q}(S\gamma)^{\frac{q-2}{2}}}\Big{\{}\frac{1}{4S}
\Big(\frac{K}{2(S\gamma)^{\frac{1}{2}}\beta^{\theta_2}}\Big)
^{\frac{1}{\theta_1}}
-|\Omega|^{\frac{q-2}{2^{*}_{s}}}\Big{\}}.
\end{equation*}
Thus, we obtain \eqref{eq38} for $\mu\in[0,\mu_{0}]$, i.e.,
\begin{equation}\label{eq42}
\|w_{+}\|_{L^{\infty}}\leq\frac{K}{2},\quad \text{for}\,\, \mu\in[0,\mu_{0}].
\end{equation}
Similarly, we can also have the estimate for the $w_{-}$, i.e.,
\begin{equation}\label{eq43}
\|w_{-}\|_{L^{\infty}}\leq\frac{K}{2},\quad \text{for }\,\, \mu\in[0,\mu_{0}].
\end{equation}

Now, let $\delta=\min\{\delta_{0},\mu_{0}\}$. For each $\mu\in [0,\delta]$, from \eqref{eq42}, \eqref{eq43}
and $|w|=w_{+}+w_{-}$, we have
\begin{equation*}\label{eq44}
\|w\|_{L^{\infty}}\leq K,\quad \text{for }\,\, \mu\in[0,\mu_{0}].
\end{equation*}

Considering this fact and $w:=w_{k}$, $k=1,2,3$ we get
$$\|w_{k}\|_{L^{\infty}}\leq K,\,\,k=0,1,2,\quad  \text{for}\,\, \mu\in[0,\delta].$$
Therefore, we obtain the inequality \eqref{eq20}.
The proof is completed.

{\bf Proof of theorem \ref{th2}.}
In fact,the truncation of $g_{K}(x,s)$ can be given by
\begin{equation*}\label{eq46}
g_{K}(x,s)=\left\{\aligned
&g(x,s)\quad  &\text{if}\, \,|s|\leq K\\
&\min\{g(x,s),C_{0}(1+K^{p-q}|s|^{q-2}s)\}\quad  &\text{if}\,\,|s|>K
\endaligned\right.
\end{equation*}
where $q\in(2\,,\,2^{*}_{s})$, Then $g_{K}$ satisfies
\begin{equation*}\label{eq46}
|g_{K}(x,s)|\leq C_{0}(1+K^{p-q}|s|^{q-2}),\quad \forall s\in \mathbb{R}.
\end{equation*}

Let $h_{K}(x,w)=\lambda f(x,w)+\mu g_{K}(x,w)$, $\forall w\in H^{s}_{0,L}(\mathcal{C})$.
The truncated problems associated to $h_{K}$
\begin{equation}\label{eq47}
\left\{
\aligned
&div(y^{1-2s}\nabla w)=0,\quad &\text{in}\,\,\mathcal{C},\\
&w=0,&\text{on}\,\,\partial_{L}\mathcal{C},\\
&\partial^{s}_{\nu}w=h_{K}(x,w),\quad &\text{in}\,\, \Omega\times\{0\}.\\
\endaligned\right.
\end{equation}

Similar the proof of the Theorem \ref{th1}, using Theorem \ref{th3}
we can prove that there exists $\delta>0$ such that the solutions $w$ for the truncated
problems \eqref{eq47} satisfy $\|w\|_{L^{\infty}}\leq K$ for $\mu\in[0,\delta]$;
and in view of the definition $g_K$, we have
$$h_{K}(x,w)=\lambda f(x,w)+\mu g(x,w).$$
Therefore $w:=w_{k}$, $k=0,1,2$,
are also solutions of the original problem \eqref{eq04}.

\end{document}